\documentclass[12pt]{amsart}
\usepackage{amssymb}
\usepackage{amsmath}
\usepackage{amscd}
\usepackage[dvips]{graphicx}
\usepackage{book tabs}
\usepackage{hyperref}
\usepackage{dsfont}
\usepackage{float}

\headheight=8pt
\topmargin=30pt 
\textheight=611pt     \textwidth=456pt
\oddsidemargin=6pt   \evensidemargin=6pt

\frenchspacing

\numberwithin{equation}{section}
\newtheorem{thm}{Theorem}
\newtheorem{prop}[thm]{Proposition}
\newtheorem{lemma}[thm]{Lemma}
\newtheorem{cor}[thm]{Corollary}

\newtheorem{assum}[thm]{Assumption}

\theoremstyle{definition}

\newtheorem{example}[thm]{Example}
\newtheorem{remark1}[thm]{Remark}
\newtheorem{openproblem1}[thm]{Open problem}

\numberwithin{thm}{section}

\newcounter{FNC}[page]
\def\newfootnote#1{{\addtocounter{FNC}{2}$^\fnsymbol{FNC}$%
     \let\thefootnote\relax\footnotetext{$^\fnsymbol{FNC}$#1}}}

\newcommand{\bs}{\backslash}

\newcommand{\R}{\mathbb{R}}

\renewcommand{\P}{\mathbb{P}}

\newcommand{\VV}{\mathcal{V}}
\newcommand{\HH}{\mathcal{H}}

\newcommand{\st}{\mathrm{s.t.}}

 % Resultant
 % total degree
 % vol

\DeclareMathOperator{\linspan}{span}
\DeclareMathOperator{\kernel}{ker}

\DeclareMathOperator{\qm}{QM}
\newcommand{\ideal}{\mathrm{I}}
\DeclareMathOperator{\pos}{pos}

\usepackage{xcolor}

% TO DO's

%%%%%%%%%%%%%%%%%%%%%%%%%%%%%%%%%%%%%%%%%%%%%%%%%%%%
\title[Multilinear Programming]{
A Semidefinite Hierarchy for Disjointly Constrained Multilinear Programming}

\author{Kai Kellner}

\address{Goethe-Universit\"at, FB 12 -- Institut f\"ur Mathematik,
Postfach 11 19 32, D--60054 Frankfurt am Main, Germany}

\email{kellner@math.uni-frankfurt.de}

%\date{\today}
% \thanks{}
%\subjclass[2010]{
%%Primary
%90C30,   	%Nonlinear programming
%%Secondary
%90C20,   	%Quadratic programming
%90C22   	%Semidefinite programming
%}

%\keywords{multilinear programming, bilinear programming, semidefinite 
%optimization, polynomial optimization, sum of squares, bimatrix games, 
%containment, polytope}

%%%%%%%%%%%%%%%%%%%%%%%%%%%%%%%%%%%%%%%%%%%%%%%%%%
\begin{document}

\begin{abstract}
Disjointly constrained multilinear programming concerns the problem of 
maximizing a multilinear function on the product of finitely many disjoint 
polyhedra.
While maximizing a linear function on a polytope (linear programming) is known 
to be solvable in polynomial time, even bilinear programming is NP-hard.
Based on a reformulation of the problem in terms of sum-of-squares polynomials, 
we study a hierarchy of semidefinite relaxations to the problem.
It follows from the general theory that the sequence of optimal values 
converges asymptotically to the optimal value of the multilinear program.
We show that the semidefinite hierarchy converges generically in finitely many 
steps to the optimal value of the multilinear problem.

We outline two applications of the main result.
For nondegenerate bimatrix games, a Nash equilibrium can be computed by the 
sum of squares approach in finitely many steps.
Under an additional geometric condition, the NP-complete containment problem 
for projections of $\HH$-polytopes can be decided in finitely many steps.
\end{abstract}

\maketitle

\section{Introduction}

A disjointly constrained multilinear programming problem is the optimization
problem of maximizing a multilinear function on the product of finitely many
disjoint polyhedra.
To be more precise, let $l\ge 2$ and  $f(x_1,x_2,\ldots,x_l)$ be a real-valued 
function linear in the variable tuples
$x_i = (x_{i,1},\ldots,x_{i,d_i})$ for $i=1,\ldots,l$.
Consider nonempty polyhedra $P_i\subseteq\R^{d_i}$ as given by the intersection 
of finitely many halfspaces and an affine subspace.
Then a (disjointly constrained) \emph{multilinear program} has the form
\begin{align}
\begin{split} \label{eq:multilinear}
  f^* := \max\ &\ f(x_1,x_2,\ldots,x_l) \\
  \st\ &\ (x_{1},\ldots,x_{l})\in P_1 \times\cdots\times P_l \, .
\end{split}
\end{align}

If $l=1$, this reduces to linear programming. Throughout the paper we assume
$l\ge 2$. 
For $l=2$, problem~\eqref{eq:multilinear} is known as \emph{bilinear
programming} or indefinite quadratic optimization.
This NP-hard problem lies in-between linear and quadratic optimization.
Originally, bilinear programming has been considered as a generalization of
bimatrix games in game theory.
Today it is motivated by a various of applications, including 
(constrained) bimatrix games~\cite{mangasarian1964two}, 
quantum information theory~\cite{berta2015},
and geometric containment problems~\cite{kellner2015,kt}.
Prominent approaches to solve bilinear and multilinear programs are based on
vertex tracking algorithms~\cite{Gallo1977,Konno1976}.

In this note we state a semidefinite hierarchy to multilinear programs based on 
the concept of sum of squares; see, e.g., the books of Blekherman et. 
al.~\cite{Blekherman2013}, or Lasserre~\cite{lasserre2015}, or the survey by 
Laurent~\cite{laurent2009}.
To that end the problem is rewritten as a program with a nonnegativity 
constraint. This constraint is then replaced by a weaker condition, namely 
being a sum of polynomial squares which can be formulated as a semidefinite 
constraint. 
A particular question is whether the multilinear structure can be used in sum
of squares certificates in order to state convergence statements.
Indeed, we show finite convergence in generic cases by using an extension of 
Putinar's Positivstellensatz stated by Marshall~\cite{marshall2009}.
This has impact on various applications including polyhedral containment and 
game theory.

It is well-known in game theory that a \emph{bimatrix game} (cf. 
Section~\ref{sec:game}) has a formulation as a disjointly constrained bilinear 
optimization problem.
The sum of squares approach to (polynomial) games has been considered by 
Laraki and Lasserre~\cite{Laraki2010}, and Parrilo~\cite{parrilo2006}.
They did not establish finite convergence.
By our main result a Nash equilibrium of a nondegenerate bimatrix game can be 
computed via sums of squares in finitely many steps.
This can be extended to finite $l$-person games.

In~\cite{kt} Theobald and the author formulated the problem of whether an 
$\HH$-polytope is contained in a $\VV$-polytope as a bilinear programming 
problem and proved finite convergence for the sum of squares hierarchy under 
some conditions.
In Section~\ref{sec:contain} we extend this result to the NP-complete 
\emph{projective polyhedral containment problem}, the problem of whether the 
(coordinate) projection of an $\HH$-polyhedron is contained in the projection 
to another $\HH$-polyhedron~\cite{kellner2015}.
Note that the projection of an $\HH$-polyhedron is again a polyhedron but a 
projection-free representation is not easy to achieve.

The paper is structured as follows.
In Section~\ref{sec:bilinear} we outline some basic geometric behavior of
multilinear programs.
A semidefinite hierarchy based on sum of squares and a proof of its
convergence in well-defined cases is given in Section~\ref{sec:sos}.
In Section~\ref{sec:apps} we sketch the application of our main result to 
polyhedral containment and game theory.

\section{Multilinear Programming} \label{sec:bilinear}

Note that a multilinear function $f=f(x_1,x_2,\ldots,x_l)$ can be written as
\begin{align*}
	f &= \sum_{\emptyset\neq L\subseteq [l]} Q^{(L)}(x_i \,:\, i\in L) %\\
	= \sum_{j_1=1}^{d_1} \cdots \sum_{j_l=1}^{d_l}
	Q^{(1,\ldots,l)}_{j_1\ldots,j_{l}} x_{1,j}\cdots x_{l,j}
	+ \dots + \sum_{i=1}^l \sum_{j_i=1}^{d_i} Q^{(i)}_{j_i} x_{i,j_i}
\end{align*}
for tensors $Q^{(L)}\in\R^{\times\{d_i\ |\ i\in L\}}$ with 
$\emptyset\neq L\subseteq [l]:=\{1,\ldots,l\}$
and $Q^{[l]}\neq 0$.
For $i\in [l]$ let $a^{(i)}\in\R^{m_i}, A^{(i)}\in\R^{m_i\times d_i},\
b^{(i)}\in\R^{n_i},\ B^{(i)}\in\R^{n_i\times d_i}$.
Define the polytopes
\begin{equation*}
  P_i = \left\{ x_i \in\R^{d_i}\ |\ a^{(i)}\ge A^{(i)}x_i ,\ b^{(i)} = 
B^{(i)} x_i\right\} .
\end{equation*}
We set $\P_{1,\ldots,l} := P_1\times\ldots\times P_l$ 
and often use $\P$ if there is no risk of confusion.

Throughout the paper we make the following assumptions on the representations
of the polytopes $P_i$.

\begin{assum}\label{ass1}
For $i\in [l]$: 
\begin{enumerate}
  \item
The equality constraints are linearly independent, i.e., the dimension of the
affine subspace $\mathcal{B}_i := \{x\in\R^{d_i}\ |\ b^{(i)} = B^{(i)} x_i\}$ 
equals $d_i-n_i$ and $d_i-n_i >0$.
  \item
$\{ x\in\R^{d_i}\ |\ a^{(i)}\ge A^{(i)} x \}$ is full-dimensional in the
affine subspace $\mathcal{B}_i$, i.e., 
$\dim\{x\in\mathcal{B}\ |\ a^{(i)}\ge A^{(i)} x \}=d_i-n_i$.
\end{enumerate}
\end{assum}

The first assumption can be achieved by Gaussian elimination.
The second assumption particularly implies nonemptyness of $P_1,\ldots,P_l$. 
It can be tested (and achieved) by linear programming methods in polynomial 
time.
We understand \emph{relative interior} of a polytope $P_i$ as with respect to 
the affine subspace $\mathcal{B}_i$.

Given a polytope $P_i$, the outer normal cone $C_i$ of a boundary point $v$ of 
$P_i$ is the cone in $\linspan\{(B^{(i)})^T\}$ generated by the outer normals 
of the constraints that are active at that point, i.e., 
$C_i = \pos\{ (A^{(i)}_j)^T \ |\ a^{(i)}_j = A^{(i)}_i v ,\ j\in [m_i] \} 
\subseteq \linspan\{(B^{(i)})^T\}$.
Note that the definition of a normal cone here differs slightly from the usual 
one as we restrict it to the subspace $\linspan\{(B^{(i)})^T\}$.
The usual normal cone is given by $C_i+\linspan\{(B^{(i)})^T\}$.

The boundary of the product polytope
$\partial(\P)=\partial(P_1\times\ldots\times P_l)$ equals the product of the
boundaries $ \partial P_1 \times\ldots\times \partial P_l$.
Moreover, $F$ is a \emph{face} of $P_1\times\ldots\times P_l$ if and only if 
$F=F_1 \times\ldots\times F_l$ for faces $F_i$ of $P_i$.
In particular, any vertex of $\P$ is given by a tuple of
vertices of $P_1 ,\ldots, P_l$.

We state a basic result on multilinear programming. For a proof we refer to
Konno~\cite{Konno1976}; see also Drenick~\cite{drenick1992}.

\begin{prop} \label{prop:bilin1}
Let $P_1 ,\ldots, P_l$ be nonempty polytopes. 
Then the set of optimal solutions to~\eqref{eq:multilinear} is part of the
boundary and the maximum is attained at a tuple of vertices of 
$P_1 ,\ldots,P_l$. 
\end{prop}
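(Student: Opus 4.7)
My plan is to exploit the defining property of a multilinear function: when all but one variable block $x_i$ is held fixed, $f$ becomes an affine function of $x_i$. This reduces the multilinear maximization to a finite sequence of linear programs, and the standard LP fact that an affine function attains its maximum on a polytope at some vertex will do the work.

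First, existence of an optimum $(x_1^*, \ldots, x_l^*) \in \P$ follows from compactness of $\P = P_1 \times \cdots \times P_l$ and continuity of $f$. I will then iteratively upgrade this optimum to a tuple of vertices. Fix $x_2^*, \ldots, x_l^*$ and consider the affine function $g_1(x_1) := f(x_1, x_2^*, \ldots, x_l^*)$ on $P_1$. Its maximum over $P_1$ is attained at some vertex $v_1$ of $P_1$; moreover $g_1(v_1) \ge g_1(x_1^*) = f^*$ and $g_1(v_1) \le f^*$ since $(v_1, x_2^*, \ldots, x_l^*) \in \P$, so $g_1(v_1) = f^*$ and hence $(v_1, x_2^*, \ldots, x_l^*)$ is optimal. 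I then repeat the argument block by block: given an optimal tuple $(v_1, \ldots, v_{i-1}, x_i^*, \ldots, x_l^*)$ with $v_j$ already a vertex of $P_j$ for $j < i$, the same reasoning applied to the $i$-th block produces a vertex $v_i$ of $P_i$ such that $(v_1, \ldots, v_i, x_{i+1}^*, \ldots, x_l^*)$ is still optimal. After $l$ such steps, $(v_1, \ldots, v_l)$ is a tuple of vertices achieving $f^*$.

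The boundary statement is then automatic: each $v_i$ lies on $\partial P_i$, so the constructed optimum lies on $\partial P_1 \times \cdots \times \partial P_l = \partial \P$, the identification noted immediately before the proposition. I do not anticipate any serious obstacle, as the entire argument is a direct consequence of the blockwise affine structure of $f$ and the vertex characterization of LP optima; it essentially coincides with the classical reduction of Konno cited in the text.
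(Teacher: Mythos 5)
The paper does not actually prove Proposition~\ref{prop:bilin1}; it defers to Konno and Drenick. Your block-coordinate argument --- freeze all blocks but one, observe that $f$ becomes affine in the remaining block, invoke the LP vertex theorem, iterate --- is precisely the standard proof from that literature, and it correctly establishes the second assertion, that the maximum is attained at a tuple of vertices of $P_1,\ldots,P_l$.

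There is, however, a gap in your treatment of the first assertion. ``The set of optimal solutions is part of the boundary'' is a statement about \emph{every} optimal solution, and the paper uses it in exactly that strength: the proof of Corollary~\ref{cor:optface} starts from an \emph{arbitrary} optimal solution $(\bar{x}_1,\ldots,\bar{x}_l)$ and invokes Proposition~\ref{prop:bilin1} to place it in $\partial(P_1\times\cdots\times P_l)$, which the paper moreover identifies with $\partial P_1\times\cdots\times\partial P_l$, i.e.\ every block must lie on the boundary of its polytope. Your argument only shows that the particular optimum you construct (a vertex tuple) lies on the boundary; it says nothing about the others. The missing step would be: if some block $\bar{x}_i$ of an optimal solution lay in the relative interior of $P_i$, then the affine function $x_i\mapsto f(\bar{x}_1,\ldots,x_i,\ldots,\bar{x}_l)$ would attain its maximum over $P_i$ at a relatively interior point and hence be constant on $P_i$. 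From there one either derives a contradiction under a nondegeneracy hypothesis, or concludes that the entire fiber $\{\bar{x}_1\}\times\cdots\times P_i\times\cdots\times\{\bar{x}_l\}$ is optimal --- in which case the literal boundary claim can in fact fail for degenerate data (for instance when $f$ is constant on $\P$ even though $Q^{[l]}\neq 0$). So the first assertion needs either an additional argument or an additional hypothesis; your proof as written supplies neither.
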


We state two corollaries that will be used in the next section.

\begin{cor}\label{cor:optface}
The set of optimal solutions to~\eqref{eq:multilinear} is an union of proper
faces $F=F_1 \times\ldots\times F_l$ of $\P$.
\end{cor}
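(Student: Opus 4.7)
The plan is to show that every optimal solution $s\in S$ lies in a product face $F(s)=F_1\times\ldots\times F_l$ of $\P$ on which $f$ is identically $f^*$, so that the optimal set $S$ is recovered as $\bigcup_{s\in S}F(s)$. For each $s=(s_1,\ldots,s_l)\in S$ I would take $F_i$ to be the smallest face of $P_i$ containing $s_i$; by the product structure of faces of $\P$ recalled just before Proposition~\ref{prop:bilin1}, the product $F(s)=F_1\times\ldots\times F_l$ is then the smallest face of $\P$ containing $s$, and $s$ lies in its relative interior.

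The heart of the argument is the claim that $f\equiv f^*$ on $F(s)$, which I would prove by induction on the number of factors $l$. The base case $l=1$ is the standard linear programming observation that a linear function attaining its maximum over a polytope at a relative interior point of a face must be constant on that face. For the inductive step I would freeze $x_l$ at $s_l$ to obtain a multilinear function $\tilde f(x_1,\ldots,x_{l-1}):=f(x_1,\ldots,x_{l-1},s_l)$, which is bounded above by $f^*$ on $P_1\times\ldots\times P_{l-1}$ with equality at $(s_1,\ldots,s_{l-1})$, a relative interior point of $F_1\times\ldots\times F_{l-1}$; the inductive hypothesis then yields $\tilde f\equiv f^*$ on that smaller product face. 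For any $(y_1,\ldots,y_{l-1})\in F_1\times\ldots\times F_{l-1}$ the map $x_l\mapsto f(y_1,\ldots,y_{l-1},x_l)$ is linear in $x_l$, bounded above by $f^*$ on $P_l$, and attains $f^*$ at the relative interior point $s_l$ of $F_l$, so the base case applied once more forces it to be identically $f^*$ on $F_l$.

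Finally, Proposition~\ref{prop:bilin1} places $S$ in $\partial\P$, so each $F(s)\subseteq S$ is contained in the boundary and hence is a proper face of $\P$. The main obstacle I anticipate is the bookkeeping in the inductive step: after freezing one coordinate, the restricted linear function must attain its maximum at a relative interior point of the correct smaller face, which is exactly what forces the choice of $F_i$ as the smallest face of $P_i$ containing $s_i$ rather than some arbitrarily larger face. With that in place the induction closes cleanly, and $S=\bigcup_{s\in S}F(s)$ expresses the optimal set as the desired union of proper product faces.
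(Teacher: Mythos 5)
Your proof is correct and follows essentially the same route as the paper's: both pass to the minimal face $F_i$ of $P_i$ containing each coordinate $s_i$ of an optimal solution and exploit that $f$, being linear in each block $x_i$, must be constant equal to $f^*$ on the product of these minimal faces, with properness supplied by Proposition~\ref{prop:bilin1}. Your induction on $l$ simply makes explicit the one-block-at-a-time exchange argument that the paper compresses into a single displayed identity.
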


\begin{proof}
Let $(\bar{x}_1,\ldots,\bar{x}_l)$ be an optimal solution.
By Proposition~\ref{prop:bilin1} 
$(\bar{x}_1,\ldots,\bar{x}_l)\in\partial(P_1\times\ldots P_l)$.
Let $F_{\bar{x}_i}$ be the minimal face containing $\bar{x}_i$.
Then for any 
$(x_1,\ldots,x_l)\in F_{\bar{x}_1} \times\ldots\times F_{\bar{x}_l}$, we have
$f(\bar{x}_1,\ldots,\bar{x}_l)=f(x_1,\ldots,\bar{x}_i,\ldots,x_l)$ by the
linearity of $f$ in every $x_i=(x_{i,1},\ldots,x_{i,d_i}),\ i\in [l]$.
\end{proof}

The second corollary follows directly from the latter one.

\begin{cor} \label{cor:finsol}
Problem~\eqref{eq:multilinear} has finitely many optimal solutions if and only
if every optimal solution of the problem is a vertex of $\P$.
\end{cor}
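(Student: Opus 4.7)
The plan is to derive both directions of the equivalence directly from Corollary~\ref{cor:optface}, which characterizes the optimal set as a finite union of faces $F = F_1 \times \cdots \times F_l$ of $\P$.

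For the ``if'' direction, I would assume that every optimal solution is a vertex of $\P$. Since $\P$ is a polytope (a product of finitely many polytopes $P_1,\ldots,P_l$, each having finitely many vertices), it has only finitely many vertices. Hence the set of optimal solutions, being a subset of this finite vertex set, is finite.

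For the ``only if'' direction, I would argue by contrapositive: suppose some optimal solution $(\bar x_1,\ldots,\bar x_l)$ is not a vertex of $\P$. Recall that a vertex of $\P$ is a tuple of vertices of the factors $P_1,\ldots,P_l$, so at least one component, say $\bar x_i$, fails to be a vertex of $P_i$. By Corollary~\ref{cor:optface} the optimal set contains a product face $F_1\times\cdots\times F_l$ through $(\bar x_1,\ldots,\bar x_l)$ with $\dim F_i \geq 1$. Such a face contains infinitely many points (a one-dimensional face alone already contains a full line segment), so the set of optimal solutions is infinite, contradicting the hypothesis.

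The argument is essentially a bookkeeping step on top of Corollary~\ref{cor:optface}; no real obstacle arises. The only small care needed is to remember that ``vertex of $\P$'' means ``tuple of vertices of the $P_i$'' (already noted in the text just before Proposition~\ref{prop:bilin1}), so that non-vertex optimal points force at least one higher-dimensional factor face, and hence infinitely many optimal solutions.
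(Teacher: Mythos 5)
Your proof is correct and matches the paper's intent: the paper simply states that the corollary ``follows directly'' from Corollary~\ref{cor:optface}, and your two directions (finiteness of the vertex set of $\P$, and the fact that a non-vertex optimal point forces a positive-dimensional optimal face $F_1\times\cdots\times F_l$, hence infinitely many optima) are exactly the intended bookkeeping.
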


\section{A Semidefinite hierarchy based on sum of squares}
\label{sec:sos}

In this section, we apply sum of squares techniques to multilinear programming
problems.
Our main goal is to show that the outcoming hierarchy of semidefinite programs
converges after finitely many steps in generic cases (in a well-defined
sense); see Theorem~\ref{thm:fin-convergence}.

The multilinear problem~\eqref{eq:multilinear} can equivalently be stated as a
nonnegativity question
\begin{align*}
  f^* &= \max \left\{ f(x_1,\ldots,x_l)\ |\ x_i\in P_i,\ i\in [l] \right\} \\
  &= \min \left\{\mu\ |\ \mu-f(x_1,\ldots,x_l)\geq 0 
     \ \text{ for }\ (x_1,\ldots,x_l)\in \P \right\} .
\end{align*}
Replacing the nonnegativity condition by sum of squares is a common method in
polynomial optimization.
A polynomial $p(x)\in\R[x]$ is a \emph{sum of squares (sos)} if and only if
there exists a \emph{positive semidefinite matrix $M$} of appropriate size
such that \emph{$p(x) = [x]_t^T M [x]_t$}, where $[x]_t$ is the vector of
monomials up to degree $t$ for some nonnegative integer $t$.
We denote the set of all sos-polynomials in the variables $x$ by
$\Sigma[x]\subseteq\R[x]$.
While every sos-polynomial is nonnegative, the converse is not true in general.
Thus replacing nonnegativity by being a sum of squares in the above problem
(where the equality constraints have to be treated separately) yields a upper
bound for $f^*$,
\begin{align*}
 f^*  &\leq \min \left\{\mu\ |\ \mu-f(x_1,\ldots,x_l) \in\qm+I\right\} ,
\end{align*}
where
\[
  \qm = \left\{ \sigma_0 + \sum_{i=1}^l \sum_{j=1}^{m_i} \sigma_{ij}\,
(a^{(i)}-A^{(i)} x_i)_j \ |\ \sigma_{ij} \in\Sigma[x] \right\}
\]
is the quadratic module generated by the linear inequality constraints and
\[
  \ideal = \left\{ \sum_{i=1}^l \sum_{j=1}^{n_i} \tau_{ij} 
  (b^{(i)}-B^{(i)}x_i)_j \ |\ \tau_{i,j} \in\R[x] \right\}
\]
is the ideal generated by the linear equality constraints.

The idea is to test membership of $\mu-f(x_1,\ldots,x_l)$ in truncations of the
quadratic module and the ideal.
Testing membership in these sets can be done by deciding feasibility of a
certain semidefinite inequality system, which is solvable in polynomial time if 
the degrees are fixed. 
To be precise, define the $t$-th truncation as 
\[
  \qm_{t} = \left\{ \sigma_0 + \sum_{i=1}^l \sum_{j=1}^{m_i} \sigma_{ij}\,
(a^{(i)}-A^{(i)} x_i)_j \ |\ \sigma_{ij} \in\Sigma_{2t-2}[x]
\right\}
\]
and
\[
  \ideal_{t} = \left\{\sum_{i=1}^l \sum_{j=1}^{n_i} \tau_{ij}
(b^{(i)}-B^{(i)}x_i)_j \ |\ \tau_{i,j} \in\R_{2t-1}[x] \right\}.
\]
The $t$-th sos program is then 
\begin{align}\label{eq:putinar}
  f_t\ &:=\ \inf \left\{\mu\ |\ \mu-f(x_1,\ldots,x_l) \in\qm_{t}+I_{t} \right\} 
\end{align}
with $f^* \leq\ldots\leq f_{l+1}\le f_l$.
Note that the first relaxation order making sense is $t=l$ as the degree of
$f$ is $l$.

We state our main result.

\begin{thm} \label{thm:fin-convergence}
Let $P_1,P_2,\ldots,P_l$ be polytopes with nonempty relative interior 
satisfying Assumption~\ref{ass1}.
\begin{enumerate}
  \item
The sequence $(f_t)_t$ of optimal values~\eqref{eq:putinar} converges
asymptotically from above to the optimal value $f^*$.
  \item
If the multilinear programming problem~\eqref{eq:multilinear} has only
finitely many optimal solutions, then $f^* -f\in\qm+\ideal$.
Thus the sequence of the optimal values $(f_t)_t$ {converges in finitely many 
steps} to the optimal value $f^*$.
\end{enumerate}
\end{thm}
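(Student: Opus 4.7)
The plan is to prove the two claims by applying two different Positivstellens\"atze to the compact set $\P = P_1 \times \cdots \times P_l$. For claim (1), the tool is Putinar's Positivstellensatz after verifying archimedeanness of the quadratic module. For claim (2), the finite-convergence statement, I would invoke Marshall's strengthening of Putinar~\cite{marshall2009} based on the boundary Hessian conditions (BHC), which is exactly tailored to the situation of finitely many optimizers.

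For part (1), the only nontrivial preliminary is that $\qm + \ideal$ is archimedean. Since each $P_i$ is a bounded polytope, a standard LP duality argument produces nonnegative linear combinations of the inequalities $a^{(i)}_j - (A^{(i)} x_i)_j$ modulo the equalities that yield pointwise bounds on each coordinate $x_{i,k}$; chaining these in the usual way furnishes $N - \sum_{i,k} x_{i,k}^2 \in \qm + \ideal$ for some $N > 0$. Putinar's theorem then applies: for every $\epsilon > 0$, the polynomial $f^* + \epsilon - f$ is strictly positive on $\P$ and therefore lies in $\qm + \ideal$. Truncating by degree gives $f^* + \epsilon - f \in \qm_t + \ideal_t$ for some $t$, so $f_t \le f^* + \epsilon$. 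Monotonicity of $(f_t)_t$ and arbitrariness of $\epsilon$ yield $f_t \searrow f^*$.

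For part (2), set $g := f^* - f$, which is nonnegative on $\P$ and has exactly the set of optimizers as its zero set. By Corollary~\ref{cor:finsol}, the hypothesis forces each optimizer $\bar{x} = (\bar{x}_1,\ldots,\bar{x}_l)$ to be a vertex of $\P$, and by the product structure each $\bar{x}_i$ is a vertex of $P_i$. Marshall's extension guarantees $g \in \qm + \ideal$ once the BHC holds at each zero: regularity of the system of active constraints, strict positivity of the KKT multipliers associated with active inequalities, and positive definiteness of the Hessian of the Lagrangian on the tangent space of the active constraint manifold. At a vertex of $\P$ the tangent space is trivial, so the Hessian condition is vacuous. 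For the remaining two conditions I would argue block-wise, using multilinearity: with $\bar{x}_j$ fixed for $j \ne i$, the partial problem in $x_i$ becomes a linear program on $P_i$ whose maximum is the isolated vertex $\bar{x}_i$, and this isolation forces the normals of the inequalities active at $\bar{x}_i$ (modulo $\ideal$) to be linearly independent with strictly positive associated KKT multipliers.

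The main obstacle is the verification of the BHC at potentially degenerate vertices: if more than $d_i - n_i$ inequalities are active at $\bar{x}_i$, the KKT multipliers are not uniquely determined and strict positivity must be argued for a suitable choice of active basis. The block-wise reduction described above, combined with isolation of the optimum in the global problem, should rule out any curve of ties through $\bar{x}$ inside a face of $\P$ and therefore pin down a valid active basis with positive multipliers. Once the BHC is verified at every optimizer, Marshall's theorem gives $f^* - f \in \qm + \ideal$, and truncating as in part (1) yields $f_t = f^*$ for all sufficiently large $t$.
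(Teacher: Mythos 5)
Your overall strategy coincides with the paper's: part (1) via Putinar's Positivstellensatz after checking archimedeanness (the paper simply cites \cite[Theorem 7.1.3]{marshall2008} for linear constraints defining a polytope, your LP-duality argument is an explicit version of the same fact), and part (2) via Marshall's boundary Hessian conditions, reducing to vertices by Corollary~\ref{cor:finsol}, noting that the second-order condition is vacuous at a vertex, and using the block-wise linear program in $x_i$ to show that $\frac{\partial}{\partial x_i}f(\bar{x})$ must lie in the interior of the normal cone of the vertex $\bar{x}_i$ (else a positive-dimensional optimal face would contradict finiteness of the optimizer set). Up to that point you are reproducing the paper's proof.

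The genuine gap is exactly at the point you flag but do not resolve: degenerate vertices. Knowing that $\frac{\partial}{\partial x_i}f(\bar{x})$ lies in the interior of the normal cone $C_i$ does \emph{not} imply that it is a strictly positive combination of some linearly independent subset of $d_i-n_i$ of the \emph{given} active normals. For instance, take the cone in $\R^3$ generated by $(1,0,1)$, $(0,1,1)$, $(-1,0,1)$, $(0,-1,1)$: the interior vector $(0,0,1)$ cannot be written as a strictly positive combination of any three linearly independent generators, since every such triple forces one coefficient to vanish. Such a configuration can occur at an isolated optimizer, so "ruling out a curve of ties" does not pin down a valid active basis; isolation of the optimum is a statement about the objective, not about the combinatorics of the active normals. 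The paper's resolution is the device you are missing: choose $d_i-n_i$ linearly independent vectors in the interior of $C_i$ whose strictly positive combination gives the gradient, and \emph{adjoin the corresponding redundant inequalities} (active at $\bar{x}_i$) to the description of $P_i$. Lemma~\ref{lem:redundant} guarantees that this enlargement changes neither the quadratic module membership nor the hierarchy~\eqref{eq:putinar}, after which the explicit affine change of variables to localizing parameters $s_i$ verifies condition (3) of Proposition~\ref{prop:marshall} with coefficients $-\alpha^{(i)}_{I_i}$. Without this step (or an equivalent one) your verification of the BHC is incomplete precisely in the degenerate case, which is not excluded by the hypotheses of the theorem.
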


The proof of Theorem~\ref{thm:fin-convergence} extends the one for the
containment problem of an $\HH$-polytope in a $\VV$-polytope in~\cite{kt}.
We start with the proof of part (1) of the statement as it follows directly
from the general theory.

\begin{prop}[{Putinar's Positivstellensatz~\cite{putinar1993}}]
\label{prop:putinar}
Let $S=\{x\in\R^d\ |\ g(x)\geq 0\ \forall g\in G,\ h(x)=0\ \forall h\in H\}$
for some finite subsets $G,H\subseteq\R[x]$.
If the quadratic module 
$\qm(G) 
= \{\sigma_0 + \sum_{g\in G} \sigma_g g\ |\ \sigma_0,\sigma_g \in\Sigma[x] \}$ 
is Archimedean, then the sum of the quadratic module with the ideal generated
by $H$, $\qm(G)+I(H)$, contains every polynomial $p\in\R[x]$ positive on $S$.
\end{prop}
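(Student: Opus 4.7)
The plan is to argue by contradiction, constructing from any polynomial $p$ that is positive on $S$ but lies outside $\qm(G) + \ideal(H)$ a point $x^* \in S$ at which $p(x^*) \leq 0$, violating positivity. So suppose $p > 0$ on $S$ and $p \notin \qm(G) + \ideal(H)$. The first step is to enlarge the quadratic module: set $M := \qm(G) + \ideal(H) + \R_{\geq 0}\cdot(-p)$, which is the smallest quadratic-module-plus-ideal containing the original one together with $-p$. I would then check that $-1 \notin M$: any relation $-1 = q - \lambda p$ with $q \in \qm(G) + \ideal(H)$ and $\lambda \geq 0$ can, after rescaling by $1/\lambda$ (and using that any quadratic module containing $-1$ is all of $\R[x]$, since $r = ((r+1)/2)^2 - ((r-1)/2)^2$), be rearranged into $p \in \qm(G) + \ideal(H)$, contradicting the assumption. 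Crucially, $M$ remains Archimedean because it contains $\qm(G)$.

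Next I would apply Zorn's lemma to extend $M$ to a quadratic module $N$ that is maximal with respect to the property $-1 \notin N$. A standard maximality argument then shows that $N \cup (-N) = \R[x]$ and that $\mathfrak{p} := N \cap (-N)$ is a prime ideal, so $N$ induces a total ordering on the integral domain $\R[x]/\mathfrak{p}$, which inherits the Archimedean property from $M$.

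The main obstacle is to show that this maximal Archimedean ordering is the preordering at a real point, i.e., that there exists $x^* \in \R^d$ with $N = \{q \in \R[x] : q(x^*) \geq 0\}$. This is the content of the Kadison--Dubois (or Stone) representation theorem for Archimedean partially ordered rings. Because the Archimedean hypothesis bounds each coordinate $x_i$ modulo $\mathfrak{p}$, the Dedekind cut
\[
 x_i^* := \sup\{r \in \Q : x_i - r \in N\}
\]
is a well-defined real number; the assignment $q \mapsto q(x^*)$ extends to a ring homomorphism $\R[x] \to \R$, and I would verify, using the order axioms and Archimedean-ness, that $q \in N$ if and only if $q(x^*) \geq 0$. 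This is the technical heart of the result and requires the most care.

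Finally, for each $g \in G$ we have $g \in \qm(G) \subseteq N$, so $g(x^*) \geq 0$; and for each $h \in H$, both $\pm h \in \ideal(H) \subseteq N$, so $h(x^*) = 0$. Hence $x^* \in S$ and therefore $p(x^*) > 0$ by hypothesis. On the other hand $-p \in M \subseteq N$, so $-p(x^*) \geq 0$, i.e., $p(x^*) \leq 0$, the desired contradiction.
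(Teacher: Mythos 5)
The paper itself gives no proof of Proposition~\ref{prop:putinar}; it is quoted from~\cite{putinar1993} and used as a black box, so your attempt has to be measured against the known proofs, and measured that way it has a genuine gap at its very first step. The set $M:=\qm(G)+\ideal(H)+\R_{\geq 0}\cdot(-p)$ is a convex cone but \emph{not} a quadratic module: it is not closed under multiplication by squares, since $\sigma\cdot(-p)$ for nonconstant $\sigma\in\Sigma[x]$ need not lie in $M$. All of the structure theory you invoke afterwards --- Zorn's lemma producing a maximal $N$ with $-1\notin N$, the conclusions $N\cup(-N)=\R[x]$ and $\mathfrak{p}=N\cap(-N)$ prime, and the Kadison--Dubois representation yielding a point $x^*$ with $q\in N$ if and only if $q(x^*)\geq 0$ --- is developed for quadratic modules (or preorderings) and uses closure under $\Sigma[x]$-multiplication in an essential way; from a mere cone one cannot, for instance, extract multiplicativity of the evaluation $q\mapsto q(x^*)$. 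If you repair the definition by passing to the quadratic module actually generated, $M':=\qm(G)+\ideal(H)+\Sigma[x]\cdot(-p)$, then your verification that $-1\notin M'$ collapses: the offending relation is now $-1=q-\sigma p$ with a nonconstant sos multiplier $\sigma\in\Sigma[x]$, and no rescaling turns this into $p\in\qm(G)+\ideal(H)$, since one cannot divide by a polynomial. Eliminating exactly such sos multipliers by means of the Archimedean hypothesis is the hard core of Putinar's theorem --- it is where the original functional-analytic proof uses the GNS construction and the spectral theorem, and where the algebraic proofs use Jacobi's representation theorem or P\'olya-type arguments --- so your opening step silently presupposes the result being proved.

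A secondary inaccuracy: even for honest quadratic modules, one that is maximal subject to $-1\notin N$ is in general only a \emph{semiordering}; because quadratic modules are not closed under products, $N\cap(-N)$ need not be prime and the quotient need not carry a total \emph{ring} ordering, so the ``total ordering on $\R[x]/\mathfrak{p}$'' you describe is not justified as stated. In the Archimedean case the conclusion can be salvaged --- Archimedean maximal proper quadratic modules are indeed of the form $\{q\in\R[x]\ |\ q(x^*)\geq 0\}$ for a point $x^*$ --- but that is itself a nontrivial lemma about Archimedean semiorderings, not the classical Kadison--Dubois theorem for orderings that you invoke. As written, your argument establishes only that $p\notin\qm(G)+\ideal(H)$ implies $-1\notin M$ for your cone $M$, which by itself produces no point of $S$ at which $p\leq 0$, and hence no contradiction.
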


A quadratic module is called \emph{Archimedean} if $N-x^T x\in\qm$ for some
positive integer $N$.
Archimedeaness of the quadratic module implies compactness of the set $S$, but
the converse is generally not true.
In our specific situation with only linear constraint polynomials defining a
polytope, this condition is always 
satisfied~\cite[Theorem 7.1.3]{marshall2008}.

\begin{proof}[Proof of Theorem~\ref{thm:fin-convergence} (1)]
By Proposition~\ref{prop:putinar}, $f^*-f+\varepsilon\in\qm+\ideal$ for every
$\varepsilon>0$.
The claim follows by letting $\varepsilon$ tend to zero.
\end{proof}

Proving part (2) is more involved.
Our goal is to apply Marshall's Nichtnegativstellensatz, an extension of 
Putinar's result.
To that end, we introduce some necessary notation.
See Nie~\cite{nie2012} for a similar application of Marshall's result to
general polynomial optimization problems.

Given a variety $V\subseteq\R^n$ of dimension $d$, denote by $I(V)$ the 
vanishing ideal of $V$ and by $R[V]=\R[x]/I(V)$ the coordinate ring of $V$.

\begin{prop}[{Marshall's Nichtnegativstellensatz~\cite[Theorem
9.5.3]{marshall2008}}] \label{prop:marshall}
Let $V\subseteq\R^n$ be a
% irreducible 
variety of dimension $d$ and let $f,g_1,\ldots,g_m\in\R[V]$ with 
$S = \{ x\in V\ |\ g_i(x)\ge 0,\ i\in [m]\}$.
Assume that the quadratic module $M$ generated by $g_1,\ldots,g_m$ is
Archimedean in $\R[V]$.
Further suppose that for any global maximizer $\bar{x}$ of $f$ on $S$ the 
following holds:
\begin{enumerate}
  \item
  $\bar{x}$ is a nonsingular point of $V$,
  \item
  there exist local parameters $t_1,\ldots,t_d$ and an index set $I$ of
  size $1\le k\le d$ such that $t_i=g_i$ for $i\in I$,
  \item
  the linear part of $f$ in the localizing parameters equals
  $\sum_{i\in I} c_i t_i$ for some negative coefficients $c_i$, $i\in I$, and
  \item
  the quadratic part of $f$ in the localizing parameters is negative definite
  on the $t_i=0$ for $i\in I$.
\end{enumerate}
Then $f_{\max}-f\in M$, where $f_{\max}$ denotes the global maximum of $f$ 
on $S$.
\end{prop}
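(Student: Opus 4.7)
The plan is to reduce the global statement $f_{\max}-f\in M$ to a local membership condition at each point of $S$ via Scheiderer's local-global principle for Archimedean quadratic modules, and then to establish the required local representation at each global maximizer using conditions (1)--(4).

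Scheiderer's principle, applicable precisely because $M$ is finitely generated and Archimedean in $\R[V]$, asserts that a polynomial $p\in\R[V]$ with $p\ge 0$ on $S$ lies in $M$ as soon as its image lies in the module generated by the $g_i$ in the completed local ring $\widehat{\R[V]}_{\bar x}$ at every $\bar x\in S$. Since $f_{\max}-f\ge 0$ on $S$ by definition of the maximum, the task reduces to a purely local computation at each $\bar x\in S$. At a non-maximizing point, $f_{\max}-f$ is a positive unit in $\widehat{\R[V]}_{\bar x}$, hence is trivially a sum of squares there; the real content is concentrated at the global maximizers of $f$ on $S$.

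Fix such a maximizer $\bar x$. By (1), $V$ is smooth at $\bar x$, so $\widehat{\R[V]}_{\bar x}\cong\R[[t_1,\ldots,t_d]]$, and by (2) one may take $t_i=g_i$ in this ring for $i\in I$. Taylor-expanding $f$ at $\bar x$ in the local parameters, using (3)--(4),
\[
  f_{\max}-f \;=\; \sum_{i\in I}(-c_i)\,g_i \;-\; \tfrac{1}{2}\,t^{T} H t \;+\; O(|t|^3),
\]
with $-c_i>0$ and $H$ the Hessian of $f$ at $\bar x$ in the coordinates $t$, negative definite on the subspace $\{t_i=0:i\in I\}$. Completing the square in the $t_i$ for $i\in I$, the quadratic form splits as $\sum_{i\in I} g_i\, L_i(t)+\Phi(t_J)$, where $J=[d]\setminus I$, the $L_i$ are linear, and $-\Phi$ is a positive definite quadratic form in $t_J$; thus $-\Phi$ is an explicit sum of squares. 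The cross-terms $g_i L_i(t)$ get absorbed into the coefficient of $g_i$, replacing the constant $-c_i$ by $-c_i+L_i(t)$, which remains a strictly positive element of $\R[[t]]$ and hence is itself a square of a unit. This yields, up to the $O(|t|^3)$ remainder, a local representation of $f_{\max}-f$ in the quadratic module generated by the $g_i$.

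The main obstacle is disposing of the higher-order remainder. Here one performs a Newton-type iteration inside the completion: strict negative definiteness of $H$ on $\{t_i=0:i\in I\}$ guarantees that the positive form $-\Phi(t_J)$ dominates any cubic or higher-order perturbation of the $t_J$-variables, while strict positivity of the units multiplying each $g_i$ is preserved under small perturbations of their coefficients. At each stage a higher-order monomial is split into a correction to some coefficient of $g_i$ (preserving unit positivity) and a correction to the sum-of-squares part (preserving positive definiteness); the iteration converges in $\widehat{\R[V]}_{\bar x}$ by completeness of the power series ring. This produces a local representation at every $\bar x\in S$, and Scheiderer's principle then assembles these local representations into the global identity $f_{\max}-f\in M$.
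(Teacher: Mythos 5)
This proposition is quoted in the paper without proof (it is taken verbatim from Theorem 9.5.3 of Marshall's book), and your argument reconstructs essentially the proof given in that cited source: Scheiderer's local-global principle for Archimedean quadratic modules reduces the global membership $f_{\max}-f\in M$ to the completed local rings, and the boundary Hessian conditions (2)--(4) produce the local representation $f_{\max}-f=\sum_{i\in I}\sigma_i g_i+\tau$ with each $\sigma_i$ a positive unit (hence a square by Hensel's lemma) and $\tau$ a sum of squares in $\R[[t_1,\ldots,t_d]]$. Your sketch is correct; the only informal step, the ``Newton-type iteration'' absorbing the $O(|t|^3)$ remainder, is precisely the standard formal square-completion (formal Morse lemma) argument, and it is worth noting that the BHC forces each maximizer to be an isolated zero of $f_{\max}-f$ on the compact set $S$, so the finitely-many-zeros hypothesis in the usual formulation of the local-global principle is automatically satisfied.
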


The last two conditions in Proposition~\ref{prop:marshall} are called
boundary Hessian condition (BHC); see~\cite{marshall2008,marshall2009}.
To prove Theorem~\ref{thm:fin-convergence} we need one more lemma.

\begin{lemma} \label{lem:redundant}
Let $P_1,\ldots,P_l$ be nonempty polytopes and $c_0 - c^T x_i \ge 0$ be a
redundant inequality in the representation of some $P_i$. Then it is also
redundant in the hierarchy~\eqref{eq:putinar}.
\end{lemma}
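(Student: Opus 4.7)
The plan is to exploit LP duality to express the redundant inequality as an explicit nonnegative combination of the remaining constraints, and then to substitute that representation into any candidate sum-of-squares certificate in order to eliminate the redundant inequality without inflating the degrees of the multipliers.

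First, I would invoke LP duality. Since $c_0 - c^T x_i \geq 0$ is redundant in the description of $P_i$, the linear program $\max\{c^T x : A^{(i)} x \leq a^{(i)},\ B^{(i)} x = b^{(i)}\}$ is feasible (as $P_i$ is nonempty) and has optimal value at most $c_0$. LP duality then produces multipliers $\alpha \in \R_{\geq 0}^{m_i}$ and $\beta \in \R^{n_i}$ with $c = (A^{(i)})^T \alpha + (B^{(i)})^T \beta$ and $\alpha^T a^{(i)} + \beta^T b^{(i)} \leq c_0$. Setting $\alpha_0 := c_0 - \alpha^T a^{(i)} - \beta^T b^{(i)} \geq 0$ yields the identity
\[
c_0 - c^T x_i \;=\; \alpha_0 \;+\; \sum_{j=1}^{m_i} \alpha_j \bigl(a^{(i)} - A^{(i)} x_i\bigr)_j \;+\; \sum_{k=1}^{n_i} \beta_k \bigl(b^{(i)} - B^{(i)} x_i\bigr)_k ,
\]
which in particular already places $c_0 - c^T x_i$ in $\qm_t + \ideal_t$ for every $t \geq 1$.

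Next, I would take any decomposition in the enlarged hierarchy (the one in which $c_0 - c^T x_i$ appears as an extra inequality generator, with sos multiplier $\sigma_c \in \Sigma_{2t-2}[x]$) and multiply the identity above by $\sigma_c$. Since $\alpha_0$ and the $\alpha_j$ are nonnegative constants, $\alpha_0 \sigma_c$ is a sum of squares that may be absorbed into $\sigma_0$, and each $\alpha_j \sigma_c$ is again a valid sos multiplier in $\Sigma_{2t-2}[x]$ for the original inequality generators; the coefficients $\beta_k \sigma_c$ are polynomials of degree at most $2t-2$, hence a fortiori at most $2t-1$, so they are valid multipliers in the truncated ideal $\ideal_t$. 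In this way the contribution of the redundant inequality is absorbed into the original $\qm_t + \ideal_t$ at the same relaxation order.

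The only step that requires genuine care is the degree bookkeeping across the truncations $\qm_t$ and $\ideal_t$, and the calculation above shows that no degree inflation occurs. Consequently, for each $t$ the SDP feasible regions with and without the redundant inequality coincide, and therefore the optimal values $f_t$ agree; thus the redundant inequality is also redundant in the hierarchy.
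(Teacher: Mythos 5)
Your proof is correct and follows essentially the same route as the paper, which simply cites the affine version of Farkas' Lemma (equivalently, the LP duality argument you spell out) and leaves the substitution and degree bookkeeping implicit; your write-up just makes those details explicit.
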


The lemma follows by a simple application of the affine version of Farkas'
Lemma; see e.g. \cite{kt}.

\begin{proof}[Proof of Theorem~\ref{thm:fin-convergence} (2)]
Let $ \bar{x}=(\bar{x}_1,\ldots,\bar{x}_l)\in \P = P_1\times\ldots\times P_l$
be an arbitrary but fixed optimal solution to~\eqref{eq:multilinear}.
As by Assumption~\ref{ass1} the linear equality constraints are linearly 
independent for each $P_i$, the rank of the gradient equals the number of 
equations for every point in the variety.
Thus every point is nonsingular (cf. \cite[Theorem 9]{cox2008}).

Since $f$ is linear in $x_i$ for any $i\in [l]$, its derivative w.r.t. $x_i$ 
has the form
\[
\frac{\partial}{\partial x_i} f(x_1,\ldots,x_l) 
= \sum_{\emptyset\neq L\subseteq [l],\ i\in L} Q^{(L)}(x_j\,:\, j\in
L\bs\{i\}) \,,
\]
a $d_i$-dimensional vector with entries in the polynomial ring 
$\R[x_1,\ldots,x_{i-1},x_{i+1},\ldots,x_{l}]$.
Every component is a multilinear function in
$(x_1,\ldots,x_{i-1},x_{i+1},\ldots,x_{l})$.

Fix $i\in [l]$.
Assume $\frac{\partial}{\partial x_i} f(\bar{x})$ lies in the sum of the 
outer normal cone (in $\linspan\{(B^{(i)})^T\}$) of an at least one-dimensional 
face $F_i$ of $P_i$ and the linear subspace $\linspan\{(B^{(i)})^T\}$.
By Corollary~\ref{cor:optface}, we have 
\[
	\left(\frac{\partial}{\partial x_i} f(\bar{x})\right) (\hat{x}_i) 
	= \max\left\{ \left(\frac{\partial}{\partial x_i} f(\bar{x})\right) (x_i)
	\ |\ x_i\in P_i \right\}
	= \left(\frac{\partial}{\partial x_i} f(\bar{x})\right) (\bar{x}_i) 
\]
for every $\hat{x}_i\in F_i$ and thus
\begin{align*}
f(\bar{x}) &= \sum_{\emptyset\neq L\subseteq [l],\ i\in L} 
Q^{(L)}(\bar{x}_j\,:\, j\in L)
+ \sum_{\emptyset\neq L\subseteq [l],\ i\not\in L} 
Q^{(L)}(\bar{x}_j\,:\, j\in L) \\
&= \left(\frac{\partial}{\partial x_i} f(\bar{x})\right) (\bar{x}_i) 
+ \sum_{\emptyset\neq L\subseteq [l],\ i\not\in L} 
Q^{(L)}(\bar{x}_j\,:\, j\in L) \\
&= \left(\frac{\partial}{\partial x_i} f(\bar{x})\right) (\hat{x}_i) 
+ \sum_{\emptyset\neq L\subseteq [l],\ i\not\in L} 
Q^{(L)}(\bar{x}_j\,:\, j\in L) \\
&= f(\bar{x}_1,\ldots,\hat{x}_i,\ldots,\bar{x}_l) \,,
\end{align*}
in contradiction to Corollary~\ref{cor:finsol} and the assumption in part (2)
of the theorem.
Hence $\frac{\partial}{\partial x_i} f(\bar{x})$ lies in the sum of the 
outer normal cone $C_i$ of $\bar{x}_i$ and the subspace 
$\linspan\{(B^{(i)})^T\}$.

As $\bar{x}_i$ is a vertex of $P_i$, $C_i$ is full-dimensional in the 
$(d_i-n_i)$-dimensional linear subspace $\linspan\{(B^{(i)})^T\}$. 
Thus there exist $d_i-n_i$ points in the outer normal cone such that
$\frac{\partial}{\partial x_i} f(\bar{x})$ is a (strictly) positive
combination of these points.
The points correspond to redundant inequalities in the representation of $P_i$ 
that are active in $\bar{x}_i$.
As by Lemma~\ref{lem:redundant} redundant constraints do not affect (existence
and degree of) the sum of squares certificates, we can assume that the
representation of $P_i$ is given in a form such that there exists an index set
$I_i\subseteq [m_i]$ of cardinality $|I_i|=d_i-n_i$ corresponding to linearly
independent, active constraints and there exist 
$\alpha^{(i)}\in\R^{m_i}$ and $\beta^{(i)}\in\R^{n_i}$ with
\begin{equation}\label{eq:poscombi}
	\frac{\partial}{\partial x_i} f(\bar{x}) = (A^{(i)})^T \alpha^{(i)} +
	(B^{(i)})^T \beta^{(i)} \,\quad \alpha^{(i)}_j > 0 \text{ for } j\in I_i 
	\text{ and } \alpha^{(i)}_j = 0 \text{ for } j\not\in I_i \,.
\end{equation}

Let $I_1,\ldots,I_l$ be index sets as constructed in the previous paragraph.
W.l.o.g. set $I_i = [d_i-n_i]$ for $i\in[l]$.
Consider the affine variable transformation
\[
  \phi :\ \R^{d_1+\dots+d_l}\to\R^{d_1+\dots+d_l} ,\quad (x_1,\ldots,x_l)
\mapsto \begin{bmatrix} (a^{(1)}-A^{(1)} x_1)_{I_1} \\ b^{(1)}-B^{(1)} x_1 \\
\vdots \\ (a^{(l)}-A^{(l)} x_l)_{I_l} \\ b^{(l)}-B^{(l)} x_l \end{bmatrix}
=: \begin{pmatrix} s_1 \\ \vdots \\ s_l \end{pmatrix} .
\]
The new variables 
$ (s_1,\ldots,s_l) := (s_{11},\ldots,s_{1d_1}\ldots,s_{l1},\ldots,s_{ld_l})$
serve as localizing parameters on the variety defined by 
$s_{ij}=(b^{(i)}-B^{(i)}x_i)_j = 0,\ j\in [d_i]\bs I_i,\ i\in [l] $
as in Proposition~\ref{prop:marshall}.
The inverse of $\phi$ is given by 
\[
   \phi^{-1} :\ \R^{d_1+\dots+d_l}\to\R^{d_1+\dots+d_l} ,\quad
  (s_1,\ldots,s_l)\mapsto 
  \begin{bmatrix}
  \begin{bmatrix} A^{(1)}_{I_1} \\[+1ex] B^{(1)} \end{bmatrix}^{-1} \cdot 
  \left( \begin{pmatrix} a^{(1)}_{I_1} \\[+1ex] b^{(1)} \end{pmatrix}
  -s_1\right)
  \\ \vdots \\ 
  \begin{bmatrix} A^{(l)}_{I_l} \\[+1ex]  B^{(l)} \end{bmatrix}^{-1} \cdot 
  \left( \begin{pmatrix} a^{(l)}_{I_l} \\[+1ex] b^{(l)} \end{pmatrix}
  -s_l\right)
  \end{bmatrix} .
\]

Setting 
$ \bar{A}^{(i)} = [A^{(i)}_{I_i}, B^{(i)}]^T ,\ 
  \bar{a}^{(i)} = (a^{(i)}_{I_i},b^{(i)})^T $,
the objective function $f$ has the form
\begin{align*}
  f\circ\Phi^{-1}(s_1,\ldots,s_l) 
%  &=
%f\left( (\bar{A}^{(1)})^{-1} \left(\bar{a}^{(1)}-s_1\right), \ldots,
%(\bar{A}^{(l)})^{-1} \left(\bar{a}^{(l)}-s_l\right) \right) \\
  &= \sum_{\emptyset\neq L\subseteq [l]} Q^{(L)} \left( (\bar{A}^{(i)})^{-1}
\left(\bar{a}^{(i)}-s_i\right) \,:\, i\in L \right) \\
  &= \sum_{\emptyset\neq L\subseteq [l]} \sum_{M\subseteq L} 
  Q^{(L)} \left( \{ -(\bar{A}^{(i)})^{-1} s_i \,:\, i\in M \}\cup
  \{ (\bar{A}^{(i)})^{-1} \bar{a}^{(i)} \,:\, i\in L\bs M \} \right)
\end{align*}
in the parameterization space.
The homogeneous degree-1 part of $f$ in $(s_1,\ldots,s_l)$ is
\[
  f_1 := \sum_{\emptyset\neq L\subseteq [l]} \sum_{i\in L} 
  Q^{(L)} \left( \{ -(\bar{A}^{(i)})^{-1} s_i \}\cup
  \{ (\bar{A}^{(j)})^{-1} \bar{a}^{(j)} \,:\, j\neq i \} \right) \,.
\]

In order to show that $f$ satisfies the conditions in 
Proposition~\ref{prop:marshall}, we show that the gradient of $f_1$ w.r.t 
$s_1,\ldots,s_l$ equals 
$(-\alpha^{(1)}_{I_1}-\beta^{(1)},\ldots,-\alpha^{(l)}_{I_l}-\beta^{(l)})$.
Then $f_1$ has only negative coefficients on the variety defined by the
equality constraints, i.e.,
\[
  f_1 = (\alpha^{(1)}_{I_1})^T (s_1)_{I_1} + \dots 
+ (\alpha^{(l)}_{I_l})^T (s_l)_{I_l} 
  \quad\text{on}\quad (s_i)_{[d_i]\bs I_i}=0,\ i\in [l] \,,
\] 
and thus the third condition of Proposition~\ref{prop:marshall} is satisfied.
Since $|I_1|+\dots+|I_l|=d_1-n_1+\ldots+d_l-n_l$ equals the dimension of the
variety defined by the equality constraints in $P_1\times\dots\times P_l$, the
last condition is obsolete.
Marshall's Nichtnegativstellensatz then implies 
$f^* -f(x_1,\ldots,x_l) \in\qm+\ideal$.

It remains to show the desired equality for the gradient of $f_1$.
To that end note that
\[
  (\bar{x}_1,\ldots,\bar{x}_l)=\phi^{-1}(0) 
  = ((\bar{A}^{(1)})^{-1}\bar{a}^{(1)}, \ldots,
  (\bar{A}^{(l)})^{-1}\bar{a}^{(l)}) \,.
\]
We have
\begin{align*}
  \frac{\partial}{\partial s_i} f_1 (0) 
  &= -\sum_{\emptyset\neq L\subseteq [l],\ i\in L} 
  ((\bar{A}^{(i)})^{-1})^T 
  Q^{(L)}( (\bar{A}^{(j)})^{-1} \bar{a}^{(j)}\,:\, j\in L\bs\{i\}) \\
%  &= - ((\bar{A}^{(i)})^{-1})^T \sum_{\emptyset\neq L\subseteq [l],\ i\in L}
%  Q^{(L)}( \bar{x}_j \,:\, j\in L\bs\{i\}) \\
  &= - ((\bar{A}^{(i)})^{-1})^T \frac{\partial}{\partial x_i} f(\bar{x}) \\
  &= - \alpha^{(i)}_{I_i} - \beta^{(i)} \,,
\end{align*}
where the last equation follows from~\eqref{eq:poscombi}.
This completes the proof.
\end{proof}

\section{Applications}\label{sec:apps}

\subsection{Geometric containment problems}\label{sec:contain}
As outlined in the introduction, this note is motivated by geometric 
containment problems.
More precisely, Theobald and the author studied the classical problem 
(cf.~\cite{freund1985,gritzmann1993,gritzmann1994}) to decide whether a 
given $\HH$-polytope is contained in a given $\VV$-polytope. 
While the converse containment problem, $\VV$-in-$\HH$, is solvable in 
polynomial time, the $\HH$-in-$\VV$ problem is co-NP-complete.
The main result in~\cite{kt} is the formulation as a certain bilinear 
programming problem and the proof of finite convergence in some cases. 
Theorem~\ref{thm:fin-convergence} extends this results to disjointly 
constrained multilinear programs.

In~\cite{kellner2015} the author gave a bilinear formulation of the projective 
polyhedral containment problem:
For $a\in\R^k$ and $b\in\R^{l}$ let
\begin{align*}
  P = \left\{(x,y)\in\R^{d+m}\ |\ a\ge Ax+A'y \right\} 
  \quad\text{and}\quad 
  Q = \left\{(x,y')\in\R^{d+n}\ |\ b\ge Bx+B'y' \right\}
\end{align*}
be nonempty polyhedra and consider their projection to $\R^d$, denoted by 
$\pi(P)$ and $\pi(Q)$.
The projective polyhedral containment problem asks whether 
$\pi(P)\subseteq\pi(Q)$.
Note that the projection of a polyhedron is again a polyhedron but a 
projection-free description is not easy to achieve.
Indeed, while deciding containment of $P$ in $Q$ is solvable in polynomial 
time, it is shown in \cite[Theorem 3.2.4]{kphd} that deciding containment of 
$\pi(P)$ in $\pi(Q)$ is co-NP-complete.
This hardness statement holds true for $m=0$.

\begin{prop}\cite[Theorem 3.1]{kellner2015}
	Let $\kernel (B'^T)\cap\R^l_+ \neq \{0\}$.
	Then $\pi(P)\subseteq\pi(Q)$ if and only if
	\begin{align}\label{eq:contain}
	\min\left\{ z^T (b-Bx) \ |\ 
	\pi(P)\times\left(\kernel(B'^T)\cap\Delta^{l}\right) \right\} \ge 0 \,,
	\end{align}
	where $\Delta^{l} = \{z\in\R^{l}\ |\ \mathds{1}_{l}^T z = 1,\ z\geq 0\}$ is 
	the $l$-simplex.
\end{prop}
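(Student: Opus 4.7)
The plan is to prove the equivalence in two steps: first applying Farkas' Lemma to characterize membership in $\pi(Q)$, and then using positive homogeneity in $z$ to reduce an optimization over the polyhedral cone $\kernel(B'^T) \cap \R^l_+$ to one over its simplex cross-section.

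First I would observe that $x \in \pi(Q)$ if and only if the linear system $B'y' \le b - Bx$ in the variable $y'\in\R^n$ is feasible. By Farkas' Lemma (in the inequality version), this system has a solution if and only if for every $z \ge 0$ with $B'^T z = 0$ one has $z^T(b - Bx) \ge 0$. Quantifying over $x \in \pi(P)$, the containment $\pi(P) \subseteq \pi(Q)$ becomes
\[
  z^T(b - Bx) \ge 0 \quad \text{for all } (x,z) \in \pi(P) \times \left( \kernel(B'^T) \cap \R^l_+ \right).
\]

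Second I would exploit the fact that $z \mapsto z^T(b - Bx)$ is linear and $\kernel(B'^T) \cap \R^l_+$ is a polyhedral cone: nonnegativity on the cone is equivalent to nonnegativity on any cross-section that meets every ray, and normalizing any nonzero nonnegative vector by its coordinate sum shows that $\kernel(B'^T) \cap \Delta^l$ is such a cross-section. The hypothesis $\kernel(B'^T) \cap \R^l_+ \neq \{0\}$ ensures that this cross-section is nonempty, so the minimum in~\eqref{eq:contain} is well-defined and its sign agrees with the infimum of $z^T(b-Bx)$ over the cone. Combining the two steps yields the stated equivalence.

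The main obstacle is arranging Farkas' Lemma so that the $x$-parameter is handled cleanly on the right-hand side, making the dual cone $\kernel(B'^T) \cap \R^l_+$ appear directly; once this is set up, the homogenization to $\Delta^l$ is essentially routine. A minor point to verify is that $\pi(P)$ is nonempty (which follows from the nonemptyness of $P$), so that the resulting bilinear minimization is feasible, and that in the excluded case $\kernel(B'^T) \cap \R^l_+ = \{0\}$ the inner Farkas condition is vacuous and $\pi(Q) = \R^d$, consistent with the necessity of the hypothesis for the reformulation to be informative.
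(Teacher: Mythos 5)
Your argument is correct: the Gale/Farkas characterization of feasibility of $B'y'\le b-Bx$ gives exactly the dual condition over $\kernel(B'^T)\cap\R^l_+$, and the homogenization to the simplex cross-section is handled properly, including the role of the hypothesis $\kernel(B'^T)\cap\R^l_+\neq\{0\}$. The paper itself states this proposition as an imported result from \cite{kellner2015} without reproducing a proof, and your Farkas-based route is the standard (and the cited source's) way to establish it, so there is nothing to contrast.
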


Sufficient for condition $\kernel (B'^T)\cap\R^l_+ \neq \{0\}$ being satisfied 
is boundedness of $\pi(Q)$.
If $\kernel (B'^T)\cap\R^l_+ = \{0\}$, then the simplex $\Delta^{l}$ 
in~\eqref{eq:contain} has to be replaced by the nonnegative orthant $\R^l_+$, 
and thus the feasible set of the bilinear program is unbounded; 
see~\cite[Theorem 3.1]{kellner2015}.
If $m=0$, we can apply Theorem~\ref{thm:fin-convergence}.

\begin{thm}
Let $P\subseteq\R^d$ be a nonempty polytope with $m=0$ and 
$\kernel (B'^T)\cap\Delta^l \neq\emptyset$.
Assume that every optimal solution to~\eqref{eq:contain} is a pair of vertices 
of $P$ and $\kernel(B'^T)\cap\Delta^{l}$.
Then hierarchy~\eqref{eq:putinar} decides containment in finitely many steps.
\end{thm}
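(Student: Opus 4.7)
The plan is to cast \eqref{eq:contain} as an instance of the bilinear programming problem~\eqref{eq:multilinear} (with $l=2$) and apply Theorem~\ref{thm:fin-convergence}(2). Since $m=0$ we have $\pi(P)=P$, so the first factor polytope is $P_1 := \{x \in \R^d : a \geq Ax\}$ and the second is $P_2 := \kernel(B'^T) \cap \Delta^l$, encoded by the inequalities $z \geq 0$ and the equalities $B'^T z = 0,\ \mathds{1}_l^T z = 1$. Minimizing $z^T(b-Bx)$ is equivalent to maximizing the bilinear function $\tilde f(x,z) := z^T(Bx-b)$ on $P_1 \times P_2$, and $\pi(P) \subseteq \pi(Q)$ if and only if the resulting maximum $\tilde f^*$ is $\leq 0$.

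Next I would verify Assumption~\ref{ass1} after a harmless preprocessing. For $P_1$ there are no explicit equality constraints ($n_1=0$), so condition~(1) is vacuous; condition~(2) can be forced by detecting any implicit equalities in $\{a \geq Ax\}$ via linear programming and moving them into the equality system, which does not change $P_1$. For $P_2$, the rows of $[B'^T;\ \mathds{1}_l^T]$ may be linearly dependent, but redundant equalities are dropped by Gaussian elimination; and should $P_2$ fail to be full-dimensional in the resulting affine subspace, the implicit equalities $z_j=0$ are likewise identified by linear programming and moved. By Lemma~\ref{lem:redundant} the hierarchy \eqref{eq:putinar} is unaffected by these transformations, and the hypothesis $\kernel(B'^T) \cap \Delta^l \neq \emptyset$ ensures that the resulting $P_2$ has nonempty relative interior.

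With Assumption~\ref{ass1} in place, the hypothesis that every optimizer of \eqref{eq:contain} is a pair of vertices of $P$ and of $\kernel(B'^T) \cap \Delta^l$, combined with Corollary~\ref{cor:finsol}, implies that the bilinear program has only finitely many optimal solutions. Theorem~\ref{thm:fin-convergence}(2) then yields $\tilde f^* - \tilde f \in \qm + \ideal$, so the hierarchy $(f_t)_t$ from~\eqref{eq:putinar} attains the optimal value $\tilde f^*$ for some finite $t$; comparing this value to $0$ decides containment.

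The main obstacle is the preprocessing to meet Assumption~\ref{ass1}: both factor polytopes must be presented so that their relative interiors (in the affine hulls induced by the equality constraints) are nonempty, so that Theorem~\ref{thm:fin-convergence} applies. This is handled by standard LP-based identification and removal of implicit equalities, with Lemma~\ref{lem:redundant} guaranteeing that the semidefinite hierarchy is preserved under these reformulations.
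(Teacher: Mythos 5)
Your proposal is correct and follows essentially the same route as the paper, which proves this theorem only implicitly via the one-line remark ``If $m=0$, we can apply Theorem~\ref{thm:fin-convergence}'': you identify $P_1=P$ and $P_2=\kernel(B'^T)\cap\Delta^l$, use the vertex hypothesis with Corollary~\ref{cor:finsol} to get finitely many optima, and invoke Theorem~\ref{thm:fin-convergence}(2). Your additional preprocessing discussion for Assumption~\ref{ass1} is a welcome refinement the paper omits, though note that Lemma~\ref{lem:redundant} as stated concerns dropping redundant inequalities rather than promoting implicit equalities into the ideal, so that step deserves its own (easy) justification.
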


As a special case the theorem covers $\HH$-in-$\VV$ containment since every 
$\VV$-polytope can be represented as a projection of an $\HH$-polytope.

\subsection{(Constrained) bimatrix games}\label{sec:game}
A \emph{bimatrix game} $(A,B)$ is given by a pair of \emph{payoff matrices} 
$A, B\in\R^{m\times n}$ and the sets of \emph{mixed strategies} for player 1, 
$\Delta_m = \{x\in\R^m\ |\ \mathds{1}_m^T x=1,\ x\ge 0 \}$, and player 2, 
$\Delta_n = \{y\in\R^n\ |\ \mathds{1}_n^T y=1,\ y\ge 0 \}$.
The payoffs are given by $x^T A y$ for player 1 and by $x^T B y$ for player 2.
Both players want to maximize their payoff.
A pair of mixed strategies $(\bar{x},\bar{y})$ is called a 
\emph{(Nash) equilibrium} if
\[
	\bar{x}^T A\bar{y} \ge x^T A \bar{y}\ \forall x\in\Delta_m
	\quad\text{ and }\quad
	\bar{x}^T B \bar{y} \ge \bar{x}^T B y\ \forall y\in\Delta_n \,.
\]
Mangasarian showed that the set of equilibria corresponds to a subset of the 
vertices of certain polyhedra and can be computed via quadratic programming; 
see~\cite{mangasarian1964,mangasarian1964two}.
As adding a positive constant to every entry of the matrices $A,B$ does not 
change the set of equilibria (see, e.g.,~\cite{vonstengel2002}), we assume that 
$A,B$ are entrywise positive.
Then using a projective transformation boundedness of the polyhedra can be 
achieved.
Define
\[
	P_1 := \{ x\in\R^m\ |\ B^T x \le\mathds{1}_n,\ x\ge 0\} \text{ and }
	P_2 := \{ y\in\R^n\ |\ Ay \le\mathds{1}_m,\ y\ge 0\} \,.
\]

\begin{prop}\cite{mangasarian1964two}\label{prop:manga}
Let $A, B\in\R^{m\times n}$ have only positive entries.
A pair $(\bar{x},\bar{y})$ solves the bilinear 	program
\begin{align}
\begin{split}\label{eq:manga}
	\max\ &\ x^T (A+B) y - x^T \mathds{1}_m - \mathds{1}_n^T y \\
	\ &\ x\in P_1,\ y\in P_2
\end{split}
\end{align}
if and only if $(\frac{\bar{x}}{\mathds{1}_m^T 
\bar{x}},\frac{\bar{y}}{\mathds{1}^T_n \bar{y}})\in\Delta_m\times \Delta_n$
is an equilibrium point for the bimatrix game $(A,B)$.
\end{prop}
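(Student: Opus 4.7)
The plan is to exploit the observation that on $P_1\times P_2$ the objective of~\eqref{eq:manga} is bounded above by $0$, with equality encoding a complementary slackness condition that matches the Nash best-response inequalities; the projective rescaling $(x,y)\mapsto(x/\mathds{1}_m^Tx,\,y/\mathds{1}_n^Ty)$ then transports nontrivial optimizers of the bilinear program into $\Delta_m\times\Delta_n$ and back.

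First I would establish the upper bound. For any $(x,y)\in P_1\times P_2$ the constraint $Ay\le\mathds{1}_m$ together with $x\ge 0$ gives $x^TAy\le x^T\mathds{1}_m$, and symmetrically $B^Tx\le\mathds{1}_n$ with $y\ge 0$ gives $x^TBy\le\mathds{1}_n^Ty$. Adding these yields $x^T(A+B)y-x^T\mathds{1}_m-\mathds{1}_n^Ty\le 0$, with equality if and only if the complementary slackness conditions
\[
  x_i(1-(Ay)_i)=0 \text{ for all } i,\qquad y_j(1-(B^Tx)_j)=0 \text{ for all } j
\]
both hold.

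For the easier (reverse) direction, I would take an equilibrium $(\tilde x,\tilde y)\in\Delta_m\times\Delta_n$ and set $\alpha:=\tilde x^TA\tilde y$, $\beta:=\tilde x^TB\tilde y$; the positivity of $A,B$ forces $\alpha,\beta>0$. The Nash inequalities rewrite as $A\tilde y\le\alpha\mathds{1}_m$ and $B^T\tilde x\le\beta\mathds{1}_n$, so $\bar x:=\tilde x/\beta\in P_1$ and $\bar y:=\tilde y/\alpha\in P_2$, and a direct calculation shows the objective at $(\bar x,\bar y)$ equals $(\alpha+\beta)/(\alpha\beta)-1/\beta-1/\alpha=0$, hence $(\bar x,\bar y)$ is optimal and its normalization recovers $(\tilde x,\tilde y)$.

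Conversely, let $(\bar x,\bar y)$ be a nontrivial optimizer (the trivial maximizer $(0,0)$ must be excluded since its normalization is undefined; existence of a nontrivial optimizer is guaranteed by combining Nash's theorem with the previous paragraph). By the upper bound the optimal value is $0$, so the complementary slackness conditions hold. Setting $\tilde x:=\bar x/\mathds{1}_m^T\bar x$ and $\tilde y:=\bar y/\mathds{1}_n^T\bar y$, these conditions rescale to $(A\tilde y)_i=\tilde x^TA\tilde y$ on $\supp(\tilde x)$ and $(A\tilde y)_i\le\tilde x^TA\tilde y$ otherwise, which is exactly the Nash inequality $\tilde x^TA\tilde y\ge x^TA\tilde y$ for all $x\in\Delta_m$; the symmetric argument handles the second Nash inequality. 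The main obstacle I anticipate is keeping the scaling factors $\alpha,\beta$ and the normalizations $\mathds{1}_m^T\bar x,\mathds{1}_n^T\bar y$ consistent across the two directions and ruling out the degenerate case $\mathds{1}_m^T\bar x=0$ or $\mathds{1}_n^T\bar y=0$, but entrywise positivity of $A$ and $B$ makes all these quantities strictly positive so the bookkeeping reduces to the calculation outlined above.
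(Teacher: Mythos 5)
The paper offers no proof of this proposition---it is quoted directly from Mangasarian and Stone---so there is no in-paper argument to compare against; judged on its own, your proof is the standard one and is essentially correct. The upper bound $x^T(A+B)y-x^T\mathds{1}_m-\mathds{1}_n^Ty\le 0$ on $P_1\times P_2$ obtained by pairing $Ay\le\mathds{1}_m$ with $x\ge 0$ and $B^Tx\le\mathds{1}_n$ with $y\ge 0$, the characterization of equality by complementary slackness, the rescaling of an equilibrium $(\tilde x,\tilde y)$ to the zero-objective feasible point $(\tilde x/\beta,\tilde y/\alpha)$, and the support computation turning complementary slackness at a nontrivial optimizer into the two best-response inequalities are all sound; you also correctly use Nash's theorem to certify that the optimal value is $0$ and correctly set aside the trivial optimizer $(0,0)$, whose normalization is undefined. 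One point deserves emphasis: your reverse direction shows that every equilibrium is the normalization of \emph{some} optimal pair, namely $(\tilde x/\beta,\tilde y/\alpha)$, whereas the proposition as literally written asserts that \emph{every} feasible pair normalizing to an equilibrium is optimal. That literal claim is false: for an equilibrium $(\tilde x,\tilde y)$ the pair $(\tilde x/(2\beta),\tilde y/(2\alpha))$ is feasible and normalizes to $(\tilde x,\tilde y)$, yet its objective value is $-\tfrac{1}{4\beta}-\tfrac{1}{4\alpha}<0$. So your argument proves the correct (and clearly intended) reading of the statement; the discrepancy lies in the paper's formulation rather than in your proof, and it would be worth a remark that the equivalence should be read as a correspondence between the set of nontrivial optimal solutions and the set of equilibria via normalization, which is also all that is needed for Theorem~\ref{thm:game}.
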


A bimatrix game is called \emph{nondegenerate} if in any basic feasible 
solution to the system 
\[
	Ay+s=\mathds{1}_m,\ B^T x+t=\mathds{1}_n,\ x,y,s,t\ge 0
\]
all basic variables have positive values.
(See the survey by von Stengel~\cite{vonstengel2002} for equivalent definitions 
of nondegeneracy.)
In the case of a nondegenerate bimatrix game, the number of equilibria is 
finite as they only appear at vertices of $P_1\times P_2$; 
see~\cite{vonstengel2002}.

\begin{thm}\label{thm:game}
	Let $(A,B)$ be a nondegenerate bimatrix game.
	Then a equilibrium can be computed by hierarchy~\eqref{eq:putinar} in 
	finitely many steps.
\end{thm}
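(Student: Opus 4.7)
The strategy is to reduce Theorem~\ref{thm:game} directly to Theorem~\ref{thm:fin-convergence}(2) via Mangasarian's bilinear reformulation~\eqref{eq:manga}. First, I verify that the feasible set $P_1\times P_2$ of~\eqref{eq:manga} fits the framework of Section~\ref{sec:sos}: since $A,B$ have strictly positive entries, every coordinate of $x\in P_1$ and $y\in P_2$ is bounded above (by the reciprocal of a column/row entry), so $P_1$ and $P_2$ are bounded polytopes. The defining systems have no linear equality constraints ($n_1=n_2=0$), so part (1) of Assumption~\ref{ass1} is vacuous. Strict positivity of $A$ (respectively $B$) makes the origin a strict interior point of $P_2$ (respectively $P_1$), so both polytopes are full-dimensional, giving part (2).

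Next I argue that the bilinear program~\eqref{eq:manga} has only finitely many optimal solutions. By Proposition~\ref{prop:manga}, the optimizers of~\eqref{eq:manga} are exactly the pairs $(\bar x,\bar y)\in P_1\times P_2$ whose normalization $(\bar x/\mathbf{1}_m^T\bar x,\bar y/\mathbf{1}_n^T\bar y)$ is a Nash equilibrium of $(A,B)$. For a nondegenerate game, von Stengel~\cite{vonstengel2002} shows that the set of equilibria is finite and that each equilibrium $(x^*,y^*)$ sits at a vertex of $\Delta_m\times\Delta_n$ in which precisely the tight support structure pins down the active inequalities among $\{x\ge 0,\ B^Tx\le\mathbf{1}_n\}$ and $\{y\ge 0,\ Ay\le\mathbf{1}_m\}$. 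Consequently the preimage of $(x^*,y^*)$ under normalization is a single vertex of $P_1\times P_2$: the scaling factors $\mathbf{1}_m^T\bar x$ and $\mathbf{1}_n^T\bar y$ are uniquely determined by the requirement that the same collection of inequalities be active. Hence~\eqref{eq:manga} has only finitely many optima, all vertices of $P_1\times P_2$ (consistent with Corollary~\ref{cor:finsol}).

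With Assumption~\ref{ass1} verified and finitely many optimizers established, Theorem~\ref{thm:fin-convergence}(2) applies to~\eqref{eq:manga} and yields $f^*-f\in\qm+\ideal$, so the sos hierarchy~\eqref{eq:putinar} reaches the optimal value after finitely many relaxation orders. A Nash equilibrium of $(A,B)$ is then recovered by extracting an optimizer $(\bar x,\bar y)$ from the truncated moment data at the finite step of exactness and normalizing.

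The main obstacle is the second step: passing from the finiteness of Nash equilibria (in $\Delta_m\times\Delta_n$) to finiteness of the set of optima of~\eqref{eq:manga} (in $P_1\times P_2$). The nontrivial content is that the rescaling map from~\eqref{eq:manga}-optima to equilibria is essentially injective, which requires identifying the scaling constants with combinatorial data of the active inequalities at a vertex of $P_1\times P_2$; nondegeneracy is exactly what guarantees these constants are uniquely pinned down so that each equilibrium lifts to a single vertex rather than a positive-dimensional face.
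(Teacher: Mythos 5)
Your proposal is correct and follows essentially the same route as the paper, which simply observes that nondegeneracy makes $P_1\times P_2$ full-dimensional with all optima of~\eqref{eq:manga} at vertices and then invokes Theorem~\ref{thm:fin-convergence}(2); you merely spell out the verification of Assumption~\ref{ass1} and the finiteness of the optimizer set (via uniqueness of the scaling factors, which are in fact the reciprocal equilibrium payoffs) that the paper leaves to the citation of von Stengel. One cosmetic slip: the origin is a vertex, not an interior point, of $P_1$ and $P_2$; full-dimensionality instead follows from $\varepsilon\mathds{1}$ being interior for small $\varepsilon>0$, so nothing is affected.
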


\begin{proof}
By the nondegeneracy assumption, the polytopes in~\eqref{eq:manga} are 
full-dimensional and all optima of the bilinear problem are vertices.
Thus we can apply Theorem~\ref{thm:fin-convergence}.
\end{proof}

Note that the bilinear formulation of bimatrix games can be extended to a 
multilinear formulation of finite $l$-person games; see Mills~\cite{mills1960}.
However, other than bimatrix games, the geometry of $l$-person games is not 
well understood.
In \emph{constrained} bimatrix games, the probability simplices are replaced by 
arbitrary polytopes.
A formulation as a bilinear program is along the same lines as for the 
non-constrained case.
In both cases, an $l$-person game and a constrained game, as long as the number 
of equilibria is known to be finite, Theorem~\ref{thm:game} holds true.

%%%%%%%%%%%%%%%%%%%%%%%%%%%%%%%%%%%%%%%%%%%%%%%%%%%%%%%%%%%%%%%%%%%%%%%%%%%%%%%

\bibliography{multilinear}
\bibliographystyle{plain}

\end{document}